\def\x{x}
\newtheorem{theorem}{Theorem}[section]
\newtheorem{corollary}[theorem]{Corollary}
\theoremstyle{definition}
\newtheorem{remark}[theorem]{Remark}
\numberwithin{equation}{section}
\numberwithin{theorem}{section}
\newcommand{\comment}[1]{}
\begin{document}{}

\title{A Class of Moving Boundary Problems with a Source Term. \\ Application of a Reciprocal Transformation}

\author{Adriana C. Briozzo$^{1,2}$, Colin Rogers$^{3}$, Domingo A. Tarzia$^{1,2}$\\[5pt]
(1) Departamento. de Matem\'atica, FCE Universidad Austral, \\Paraguay 1950, Rosario, Argentina\\
(2) CONICET, Argentina  \\ \texttt{abriozzo@austral.edu.ar, dtarzia@austral.edu.ar}\\(2) School of Mathematics and Statistics, The University of New South Wales, \\Sydney, NSW2052, Australia\\
\texttt{c.rogers@unsw.edu.au}}
\maketitle

\begin{abstract} 
We consider a new Stefan-type problem for the classical heat equation with a latent heat and phase-change temperature depending of the variable time. We prove the equivalence of this Stefan problem with a class of boundary value problems for the nonlinear canonical evolution equation involving a source term with two free boundaries. This equivalence is obtained by applying a reduction to a Burgers equation and a reciprocal-type transformations. Moreover, for a particular case, we obtain a unique explicit solution for the two different problems.
\end{abstract}

\section{Introduction}

In \cite{pbcr93}, a systematic search was undertaken via Lie-B\"acklund transformations for classes of nonlinear evolution equations which are reducible to a canonical form as originally set down in \cite{nfjs89} which incorporates a source term. This nonlinear equation was shown in \cite{nfjs89} to admit reduction to Burgers equation and an explicit pulse solution with compact support together with an analytic description of interaction between pulses were thereby derived. In \cite{pbcr93}, an extension of the Fokas-Yortsos equation with convective term of \cite{afyy82,crmsdc83} was derived via the Lie-B\"acklund analysis and which incorporates a novel reaction term. This nonlinear evolution equation was shown to be relevant to the modelling of unsaturated flow in a soil with a volumetric extraction mechanism. In \cite{pbcr93}, a reciprocal transformation was used to solve a nonlinear boundary-value problem incorporating a source term descriptive of transient flow in a finite layer of soil subject to a constant flux boundary condition to compensate for water extraction.

Here by contrast, our concern is with a class of moving boundary problems pertinent to soil mechanics. An inverse procedure is adopted whereby a class of boundary value problems for the canonical nonlinear evolution equation of \cite{nfjs89} involving a source term is shown to be amenable to analytic solution by application of a reciprocal link to a Stefan-type problem for the classical heat equation.

It is recalled that moving boundary problems of Stefan-type have their origin in the analysis of the melting of solids and the freezing of liquids (see e.g. \cite{lr71,af82,cejo82,jc84,dt00,dt11} and literature cited therein). The standard Stefan problems concern moving boundary problems for the classical linear heat equation where the heat balance requirement on the moving boundary separating the phases leads to a nonlinear boundary condition on the temperature.

In \cite{abdt20a,abdt20b,fcsl94,masl00,masl2000}, a novel integral representation version of the Hopf-Cole transformation was used to treat certain classes of Stefan-type problems for Burgers equation. Reciprocal-type transformations on the other hand have been previously applied to solve a wide range of nonlinear moving boundary problems such as arise in nonlinear heat conduction, the analysis of the melting of metals, sedimentation and other physical contexts \cite{cr85,crtr85,cr86,crpb88,crbg88,crpb92,afcrws05,cr15,cr2015,cr16,cr17,cr19,cr2019}. It is remarked that the results in \cite{cr2015,cr16,cr17} obtained via application of reciprocal transformations concern moving boundary problems for certain solitonic equations, namely, the Dym, potential mkdV as well as the extended Dym equation derived via geometric considerations in \cite{wscr99}. In the present work, the integral representation of \cite{fcsl94} is allied with a reciprocal-type transformation to reduce to canonical form a class of moving boundary problems relevant to the soil mechanics context as described in \cite{pbcr93}.
In the following Section 2 we give a connection between a Stefan problem with variable latent heat term \cite{jbdt18a,jbdt18b,jbdt18c,jbdt18d,jbdt21,vvjs04,yzwbml13,yzywwb14,yzlx15} and time dependent temperature on the free boundary, and a moving boundary problem for the Burgers equation.
In Section 3 we use the reciprocal-type transformation to proof the equivalence between the moving boundary problem governed by Burgers equation and a free boundary problem governed by the nonlinear evolution equation with source term and two free boundaries. Then we give a parametric expression to solution of this problem through the solution to the Stefan problem.
Finally in Section 4 we solve the canonical Stefan problem for a particular latent heat and phase change temperature variable in time and we obtain an explicit solution of the similarity type. At last, we express the parametric solution to the nonlinear evolution problem with source and two free boundaries.
 
\section{A Canonical Connection}

Here, a connection is established between a classical Stefan-type problem but with variable latent heat term and a class of moving boundary problems for the nonlinear transport equation incorporating a source term of \cite{nfjs89}. The canonical Stefan problem to be considered here adopts the form
\begin{equation} \label{b1}
\begin{array}{l} T_t = T_{yy} \ , \quad 0 < y < S(t) \ , \quad t > 0
\\[2mm]-T_y(S(t),t) = L(t) \dot{S}(t) \ , \quad t > 0 \\[2mm]
T(S(t),t)= T_m(t) \ , \quad t > 0 \\[2mm]T_y(0,t)=-q \ , \quad q>0 \ , \quad t > 0 \\[2mm]
S(0) = 0 \ . \end{array} \end{equation}

On introduction of the integral representation of \cite{fcsl94}, namely
\begin{equation} \label{b2}
x^*=-1/\delta\left[\ln|C(t)-\int^y_{S(t)}T(y',t)dy'|\right]_y \ , \end{equation}
with $C(t)>0, C(0)=0=$, $\delta>0$ and 
\begin{equation} \label{b3}
T=\delta C(t)x^*(y,t)\exp\left[-\delta\int^y_{S(t)}x^*(\sigma,t)dt\right] \end{equation}
then it may be shown that (see, \textit{in extenso} \cite{cr15}) that the relations \eqref{b2} and \eqref{b3} link the classical heat equation $T_t=T_{yy}$ to the Burgers equation
\begin{equation} \label{b4}
x^*_t=x^*_{yy}-2\delta x^*x^*_y \end{equation}
if it is required that
\begin{equation} \label{b5}
\dot{C}+T_y|_{y=S(t)}+\dot{S}T|_{y=S(t)}=0 \ . \end{equation}
Thus, in view of the moving boundary conditions in \eqref{b1} it is seen that $C(t)$ is here determined via the relation
\begin{equation} \label{b6}
\dot{C}(t)=[\ L(t)-T_m(t)\ ] \dot{S}(t) \ . \end{equation}

The boundary requirements on $y=S(t)$ in the Stefan problem \eqref{b1}, in turn, become for the associated Burgers equation \eqref{b4}
\begin{equation} \label{b7}
(x^*_y-\delta x^{*2})|_{y=S(t)}=-L(t)\dot{S}(t)/\delta C(t) \end{equation}
and
\begin{equation} \label{b8}
x^*|_{y=S(t)}=T_m(t)/\delta C(t) \end{equation}
respectively. The condition on fixed face $y=0$ becomes 
\begin{equation} \label{b9}
x^*_y(0,t)-\delta x^{*2}(0,t)=\frac{-q \exp\left(\delta\int^0_{S(t)}x^*(\sigma,t)d\sigma\right)}{\delta C(t)}\end{equation} and the initial condition $S(0)=0$.

\section{Application of a Reciprocal Transformation}
Here we will consider the  nonlinear evolution equation with source term  \cite{nfjs89}, namely
\begin{equation}\label{C0}\frac{\partial\Psi}{\partial t}=\frac{\partial}{\partial x^*}\left(\frac{\Psi_{x^*}}{\Psi^2}\right)+2\delta  
\end{equation}

We obtain the following result:
\begin{theorem}
If $(x^*(y,t),S(t))$ is solution of the problem given by \eqref{b4},\eqref{b6}-\eqref{b9} then the function  $$\Psi(x^*,t)=\frac{1}{x^*_y(y,t)}$$ satisfies the following problem governed by the nonlinear evolution equation with source term, given by 

\begin{equation}\label{C}\frac{\partial\Psi}{\partial t}=\frac{\partial}{\partial x^*}\left(\frac{\Psi_{x^{*}}}{\Psi^2}\right)+2\delta  \ , \quad X_0^*(t)< x^* < X_1^*(t) \ , \quad t > 0
\end{equation} with the conditions

\begin{equation} \label{c4}
\begin{array}{l}  (i)\quad   \frac{1}{\Psi(X_1^*(t),t)}-\delta{X_1^{*}}^{2}(t) = -\frac{L(t)}{\delta C(t)}\left(\Psi(X_1^*(t),t) \dot{X_1^*}(t)+\frac{\Psi_{x^*}(X_1^*(t),t)}{\Psi^2(X_1^*(t),t)}+2\delta X_1^*(t)\right)\ , \quad t > 0 \\[3mm]
(ii)  \quad\frac{1}{\Psi(X_0^*(t),t)}-\delta{X_0^*}^{2}(t) =\frac{-q \exp\left(\int^{t}_{0}H(\tau)d\tau\right)}{\delta C(t)} \ , \quad q>0 \ , \quad t > 0 \\[3mm]
(iii) \quad \dot{C}(t)=[\ L(t)-T_m(t)\ ] \left[\Psi(X_1^*(t),t) \dot{X_1^*}(t)+\frac{\Psi_{x^*}(X_1^*(t),t)}{\Psi^2(X_1^*(t),t)}+2\delta X_1^*(t)\right] \ , \end{array} \end{equation}

where the moving boundaries $X_0^*$ and $X_1^*$ are given by
\begin{equation} \label{c5}
X_0^*(t)=-\int^t_{0} \left(\frac{\Psi_{x^*}(X_0^*(\tau),\tau)}{\Psi^3(X_0^*(\tau),\tau)}+2\delta \frac{X_0^*(\tau)}{\Psi(X_0^*(\tau),\tau)}\right)d\tau \quad,\quad X_1^*(t)= T_m(t)/\delta C(t) \ , \quad t > 0\end{equation}
and
\begin{equation}\label{ache}
H(t)=-\frac{T_m^3(t)}{L(t)C^{2}(t)}+\delta\frac{T_m(t)}{L(t)}\frac{1}{\Psi(X_1^*(t),t)}-\frac{\delta}{\Psi(X_1^*(t),t)}+\frac{T_m^2(t)}{C^{2}(t)}+\frac{\delta}{\Psi(X_0^*(t),t)}-\delta^2 {X_0^*}^{2}(t)
\end{equation}

\end{theorem}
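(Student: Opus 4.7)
The plan is to apply a reciprocal transformation that swaps the roles of $x^{*}$ and $y$, then translate each item in the Burgers boundary value problem \eqref{b4}, \eqref{b6}--\eqref{b9} into the corresponding statement for $\Psi$. Concretely, I would regard $x^{*}=x^{*}(y,t)$ as invertible in $y$ for each $t$, introduce $y=y(x^{*},t)$, and observe that the stated $\Psi(x^{*},t)=1/x^{*}_{y}$ coincides with $y_{x^{*}}$. The elementary differentiation identities
\[
x^{*}_{y}=\tfrac{1}{\Psi},\qquad x^{*}_{yy}=-\tfrac{\Psi_{x^{*}}}{\Psi^{3}},\qquad x^{*}_{t}=-\tfrac{y_{t}}{\Psi}
\]
then allow Burgers' equation \eqref{b4} to be rewritten as $y_{t}=\Psi_{x^{*}}/\Psi^{2}+2\delta x^{*}$. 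Differentiating this once in $x^{*}$ and using $y_{x^{*}t}=\Psi_{t}$ gives the canonical nonlinear evolution equation \eqref{C}.

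For the geometry, the images of the boundaries are $X_{1}^{*}(t)=x^{*}(S(t),t)$ and $X_{0}^{*}(t)=x^{*}(0,t)$. The formula $X_{1}^{*}(t)=T_{m}(t)/\delta C(t)$ is immediate from \eqref{b8}. To get the integral representation of $X_{0}^{*}$ in \eqref{c5}, I would differentiate $X_{0}^{*}(t)=x^{*}(0,t)$ in $t$ and use $x^{*}_{t}=-y_{t}/\Psi$ together with the expression for $y_{t}$ just obtained, integrating from $0$ with $X_{0}^{*}(0)=0$. The key chain-rule identity is
\[
\dot{S}(t)=\Psi(X_{1}^{*}(t),t)\dot{X}_{1}^{*}(t)+\frac{\Psi_{x^{*}}(X_{1}^{*}(t),t)}{\Psi^{2}(X_{1}^{*}(t),t)}+2\delta X_{1}^{*}(t),
\]
which inserted directly into \eqref{b7} and \eqref{b6} yields conditions (i) and (iii) of \eqref{c4}.

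The main obstacle is condition (ii): we must convert \eqref{b9} by showing that the spatial integral in the exponent equals $\int_{0}^{t}H(\tau)d\tau$ with $H$ as in \eqref{ache}. I would set $I(t):=\delta\int_{S(t)}^{0}x^{*}(\sigma,t)d\sigma$, note $I(0)=0$, and differentiate using Leibniz's rule:
\[
\dot{I}(t)=-\delta\dot{S}(t)\,x^{*}(S(t),t)-\delta\!\int_{0}^{S(t)}\!x^{*}_{t}(\sigma,t)\,d\sigma.
\]
Substituting Burgers' equation $x^{*}_{t}=x^{*}_{yy}-2\delta x^{*}x^{*}_{y}$ turns the remaining integral into a telescoping expression $\delta[x^{*}_{y}-\delta x^{*2}]_{0}^{S(t)}$. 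Evaluating the boundary values through $x^{*}_{y}=1/\Psi$, $x^{*}(S(t),t)=X_{1}^{*}=T_{m}/\delta C$, and eliminating $\dot{S}(t)$ by means of \eqref{b7} converts the result into exactly the combination of terms in \eqref{ache}. Finally, substituting $x^{*}_{y}(0,t)=1/\Psi(X_{0}^{*},t)$ and $x^{*}(0,t)=X_{0}^{*}(t)$ into \eqref{b9} gives condition (ii).

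I would organise the write-up as: (a) derivation of the PDE \eqref{C}; (b) derivation of the boundary images and formula \eqref{c5}; (c) derivations of (i) and (iii) from \eqref{b7} and \eqref{b6}; (d) the computation of $\dot{I}=H$ and the ensuing derivation of (ii). Apart from the tracking of signs and of the various $\delta$, $C(t)$, $L(t)$, $T_{m}(t)$ factors in step (d), every step is a direct transcription through the reciprocal variables.
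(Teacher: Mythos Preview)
Your proposal is correct and follows essentially the same route as the paper: introduce the reciprocal variable $y=y(x^{*},t)$ with $\Psi=y_{x^{*}}$, rewrite Burgers' equation as $y_{t}=\Psi_{x^{*}}/\Psi^{2}+2\delta x^{*}$ and differentiate in $x^{*}$ to obtain \eqref{C}; read off $X_{1}^{*}$ from \eqref{b8}, obtain $X_{0}^{*}$ by integrating $x^{*}_{t}(0,t)$, express $\dot{S}$ via the chain rule to get (i) and (iii), and for (ii) differentiate the exponent, insert Burgers' equation to telescope the spatial integral, and eliminate $\dot S$ through \eqref{b7} to recover $H$. The only cosmetic differences are that the paper writes the reciprocal transformation in differential form $dy=\Psi\,dx^{*}+(\Psi_{x^{*}}/\Psi^{2}+2\delta x^{*})\,dt$ and derives $X_{0}^{*}$ and $\dot S$ from an explicit integral representation $x^{*}(y,t)=\int_{0}^{y}\rho\,d\sigma+M(t)$ rather than directly from $x^{*}_{t}=-y_{t}/\Psi$ and $y(X_{1}^{*}(t),t)=S(t)$.
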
 

\begin{proof}
Let\begin{equation} \label{c1}
\rho(y,t)=x^*_y(y,t)\end{equation}
and we define \begin{equation} \label{c2}
\Psi(x^*,t)=\frac{1}{\rho(y,t)}.\end{equation}
From \eqref{b4} we known that \begin{equation} \label{c3}
dx^*=\rho dy+(\rho_y-2\delta\rho x^*)dt,\end{equation}
and from \eqref{c2}, we have \begin{equation}\rho_y=-\frac{\Psi_{x^*}}{\Psi^2}x^*_y=-\frac{\Psi_{x^*}}{\Psi^3}
\end{equation}
then \begin{equation} \label{c6}
dx^*=\Psi^{-1}dy-(\Psi_{x^*}/\Psi^3+2\delta x^*/\Psi)dt \end{equation}
thus the reciprocal transformation is given by
\begin{equation} \label{c7}
dy=\Psi dx^*+(\Psi_{x^*}/\Psi^2+2\delta x^*)dt . \end{equation}
and we have 
\begin{equation} 
\dfrac{\partial y}{\partial x^*}=\Psi \ , \quad \dfrac{\partial y}{\partial t}=\dfrac{\Psi_{x^*}}{\Psi^2}+2\delta x^*\end{equation}

Therefore \eqref{C} follows. From \eqref{c1} we can write
\begin{equation} 
x^*(y,t)=\int^y_0 \rho(\sigma,t)d\sigma+M(t)\end{equation} then
\begin{equation} 
x_t(y,t)=\int^y_0 \rho_t(\sigma,t)d\sigma+M'(t)=\int^y_0 \left(x^*_{\sigma\sigma}(\sigma,t)-2\delta x^*(\sigma,t)x^*_\sigma(\sigma,t)\right)_\sigma d\sigma+M'(t)\end{equation}which implies that 
\begin{equation}
M'(t)=x^*_{yy}(0,t)-2\delta x^*(0,t)x^*_y(0,t)
\end{equation} therefore
\begin{equation} \label{equis}
x^*(y,t)=\int^y_0 \rho(\sigma,t)d\sigma+\int^t_0 \left[x^*_{yy}(0,\tau)-2\delta x^*(0,\tau)x^*_y(0,\tau)\right]d\tau\end{equation}
For $y=0$, we obtain 
\begin{equation}
x^*(0,t)=\int^t_0 \left[x^*_{yy}(0,\tau)-2\delta x^*(0,\tau)x^*_y(0,\tau)\right]d\tau,
\end{equation}
if we notice $X^*_0(t):=x^*(0,t)$ and taking into account \eqref{c1} -\eqref{c3} we have
\begin{equation} \label{c55}
X_0^*(t)=-\int^t_{0} \left(\frac{\Psi_{x^*}(X_0^*(\tau),\tau)}{\Psi^3(X_0^*(\tau),\tau)}+2\delta \frac{X_0^*(\tau)}{\Psi(X_0^*(\tau),\tau)}\right)d\tau .\end{equation} From \eqref{b8} we obtain \begin{equation}X^*_1(t):=x^*(S(t),t)=T_m(t)/\delta C(t)
\end{equation}
Then the nonlinear evolution equation with source term  \cite{nfjs89}, 
given by \eqref{C} is obtained in the domain $X^*_0(t)<x^*<X^*_1(t)$, $t>0$.

To prove \eqref{c4}(i), we consider condition \eqref{b7} which is equivalent to
\begin{equation} \label{condfronteralibre}
\dfrac{1}{\Psi(X^*_1(t),t)}-\delta {X^*}^{2}_1(t)=-L(t)\dot{S}(t)/\delta C(t). \end{equation} 
Now we must write $\dot{S}(t)$ as a function of the new variables and free boundaries. From \eqref{equis} we obtain
\begin{equation} 
X^*_1(t)=x^*(S(t),t)=\int_0^{S(t)}\rho(\sigma,t)d\sigma+\int^t_0 \left[x^*_{yy}(0,\tau)-2\delta x^*(0,\tau)x^*_y(0,\tau)\right]d\tau\end{equation}
then
\begin{equation} 
\dot{X}^*_1(t)=\dfrac{\dot{S}(t)}{\Psi(X^*_1(t),t)}+x^*_{yy}(S(t),t)-2\delta x^*(S(t),t)x^*_y(S(t),t)
\end{equation}and taking into account
\[x^*_{yy}(S(t),t)-2\delta x^*(S(t),t)x^*_y(S(t),t)=-\Psi_{x^*}(X^*_1(t),t)/\Psi^3(X^*_1(t),t)-2\delta X^*_1(t)/\Psi(X^*_1(t),t)
\] we obtain
\begin{equation}\label{esepunto}
\dot{S}(t)=\Psi(X^*_1(t),t))\dot{X}^*_1(t)+\Psi_{x^*}(X^*_1(t),t)/\Psi^2(X^*_1(t),t)+2\delta X^*_1(t)
\end{equation}
We replace $\dot{S}(t)$ in \eqref{condfronteralibre} to obtain \eqref{c4} (i).

 Next, we will deduce \eqref{c4}(ii). From \eqref{b9} we have 
 \begin{equation} 
\frac{1}{\Psi(X^*_0(t),t)}-\delta {X^*_0}^2(t)=\frac{-q \exp\left(\delta\int^0_{S(t)}x^*(\sigma,t)dt\right)}{\delta C(t)}.\end{equation} 

We define 
 $$R(t)=\exp\left(-\delta\int^{S(t)}_0 x^*(\sigma,t)dt\right)$$ then
 \[ln(R(t))=-\delta\int^{S(t)}_0 x^*(\sigma,t)dt\] and
 \[\frac{R'(t)}{R(t)}=-\delta x^*(S(t),t)\dot{S}(t)-\delta\int^{S(t)}_0 x^*_t(\sigma,t)dt=-\delta x^*(S(t),t)\dot{S}(t)-\delta\int^{S(t)}_0
\left(x^*_{\sigma\sigma}(\sigma,t)-2\delta x^*(\sigma,t)\right)d\sigma\]
  \[=-\delta x^*(S(t),t)\dot{S}(t)-\delta x^*_{y}(S(t),t)+\delta^{2} {x^*}^{2}(S(t),t)+\delta x^*_{y}(0,t)-\delta^{2} {x^*}^{2}(0,t)\]
  \[=-\delta X^*_1(t)\dot{S}(t)-\delta x^*_{y}(S(t),t)+\delta^{2} {X^*_1}^{2}(t)+\delta x^*_{y}(0,t)-\delta^{2} {X^*_0}^{2}(t)\]
 \[=-\delta^2 \frac{X^*_1(t)C(t)}{L(t)}\left[\delta {X^*_1}^{2}(t)-\frac{1}{\Psi(X^*_1(t),t)} \right]- \frac{\delta}{\Psi(X^*_1(t),t)}+\delta^{2} {X^*_1}^{2}(t)+ \frac{\delta}{\Psi(X^*_0(t),t)}-\delta^{2} {X^*_0}^{2}(t).\]

Using \eqref{c5} we obtain \[\frac{R'(t)}{R(t)}=H(t)\]with $H$, given by \eqref{ache}, and integrating it results
\[ln(R(t))-ln(R(0))=\int^{t}_0 H(\tau)d\tau\] or equivalently
\[R(t)=exp\left(\int^{t}_0 H(\tau)d\tau\right)\] because $R(0)=1$.
Therefore \eqref{b9} is equivalent to \eqref{c4}(ii).

The condition \eqref{c4}(iii) yields immediately from \eqref{b6} when $\dot{S}(t)$ is replaced by the expression \eqref{esepunto}.
\end{proof}

Reciprocally through the reciprocal transformation given by
\begin{equation} \label{trans}
dy=\Psi dx^*+(\Psi_{x^*}/\Psi^2+2\delta x^*)dt.
 \end{equation} 
we will prove that if $\Psi(x^*,t)$ satisfies \eqref{C}-\eqref{ache} then the pair $(x^*(y,t),S(t))$ is solution to the problem \eqref{b4},\eqref{b6} -\eqref{b9} where
\begin{equation}\label{ese}
S(t)=\int_{X^*_0(t)}^{X^*_1(t)}\Psi(\sigma,t)d\sigma
\end{equation}

\begin{theorem}
If $\Psi(x^*,t),X^*_0(t),X^*_1(t) $ satisfy \eqref{C}-\eqref{ache} then the pair $(x^*=x^*(y,t),S(t))$ with both components are defined by \eqref{trans} and  \eqref{ese} respectively is solution to the problem \eqref{b4},\eqref{b6} -\eqref{b9}.
\end{theorem}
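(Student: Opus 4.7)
The strategy is to reverse the construction in the previous theorem step by step; each implication used there is algebraically reversible. The pivotal structural observation is that the evolution equation \eqref{C} is precisely the integrability condition for the one-form $\omega := \Psi\,dx^{*} + (\Psi_{x^{*}}/\Psi^{2} + 2\delta x^{*})\,dt$ to be closed, so \eqref{trans} well-defines $y$ as a function of $(x^{*},t)$ up to an additive constant. I would fix that constant so that $y(X_{0}^{*}(t),t)=0$. Dynamical consistency of this identification must be checked: differentiating in $t$ and using $y_{x^{*}}=\Psi$, $y_{t}=\Psi_{x^{*}}/\Psi^{2}+2\delta x^{*}$ together with the differentiated form of the expression \eqref{c5} for $\dot{X}_{0}^{*}$, the result vanishes identically. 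By \eqref{ese} one then has $y(X_{1}^{*}(t),t)=S(t)$, so the moving-boundary geometry on the Stefan side is coherent with the nonlinear evolution side.

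Inverting $y_{x^{*}}=\Psi$ and $y_{t}=\Psi_{x^{*}}/\Psi^{2}+2\delta x^{*}$ yields $x^{*}_{y}=1/\Psi$ and $x^{*}_{t}=-\Psi_{x^{*}}/\Psi^{3}-2\delta x^{*}/\Psi$. A direct calculation of $x^{*}_{yy}=-\Psi_{x^{*}}/\Psi^{3}$ then gives $x^{*}_{t}=x^{*}_{yy}-2\delta x^{*}x^{*}_{y}$, i.e.\ Burgers equation \eqref{b4}. Condition \eqref{b8} is immediate from $X_{1}^{*}(t)=T_{m}(t)/\delta C(t)$ in \eqref{c5}. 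For \eqref{b6} and \eqref{b7} I would differentiate $S(t)=\int_{X_{0}^{*}}^{X_{1}^{*}}\Psi\,d\sigma$; using \eqref{C} to rewrite $\Psi_{t}$ and cancelling the lower-endpoint boundary term via \eqref{c5} gives
\[
\dot{S}(t) = \Psi(X_{1}^{*},t)\,\dot{X}_{1}^{*}(t) + \frac{\Psi_{x^{*}}(X_{1}^{*},t)}{\Psi^{2}(X_{1}^{*},t)} + 2\delta X_{1}^{*}(t),
\]
which is exactly the bracket appearing in \eqref{c4}(i) and \eqref{c4}(iii). Substituting, \eqref{c4}(iii) reduces to \eqref{b6} and \eqref{c4}(i) reduces to \eqref{b7}.

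The most delicate step is recovering \eqref{b9}. I would set $R(t) := \exp\bigl(-\delta\int_{0}^{S(t)}x^{*}(\sigma,t)\,d\sigma\bigr)$ and compute $R'/R$ by differentiating under the integral sign. Burgers equation rewrites $x^{*}_{t}=(x^{*}_{\sigma}-\delta x^{*2})_{\sigma}$, so the $\sigma$-integral collapses to boundary contributions at $\sigma=0$ and $\sigma=S(t)$. Substituting the traces $x^{*}|_{y=S}=X_{1}^{*}=T_{m}/\delta C$, $x^{*}|_{y=0}=X_{0}^{*}$, $x^{*}_{y}=1/\Psi$, and then eliminating $\dot{S}$ via the already-verified condition \eqref{c4}(i), the boundary terms reorganize into exactly the function $H(t)$ of \eqref{ache}. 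Integrating from $t=0$ with $R(0)=1$ (consistent with $S(0)=0$) produces $R(t) = \exp\bigl(\int_{0}^{t}H(\tau)\,d\tau\bigr)$, so comparison with \eqref{c4}(ii) is precisely \eqref{b9}.

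The main obstacle will be the bookkeeping in this final step: several boundary terms must combine correctly, and \eqref{c4}(i) must be invoked at the precise moment for the combinations $T_{m}^{3}/(L C^{2})$, $\delta T_{m}/(L\Psi)$, $T_{m}^{2}/C^{2}$ etc.\ appearing in \eqref{ache} to emerge cleanly. Everything else is a systematic reversal of identities already established in the forward direction, and the closedness of $\omega$ guarantees that the construction of $x^{*}(y,t)$ is globally consistent throughout the region $0<y<S(t)$.
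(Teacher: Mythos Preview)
Your proposal is correct and follows essentially the same route as the paper: both invert the reciprocal transformation (the paper via $dx^{*}=\Psi^{-1}dy-(\Psi_{x^{*}}/\Psi^{3}+2\delta x^{*}/\Psi)\,dt$, you via closedness of $\omega$), fix the integration constant so that $y=0$ corresponds to $X_{0}^{*}(t)$, differentiate $S(t)=\int_{X_{0}^{*}}^{X_{1}^{*}}\Psi$ to recover the bracket in \eqref{c4}(i),(iii), and then handle \eqref{b9} by matching the logarithmic derivative of the exponential factor with $H(t)$. The only cosmetic difference is that the paper starts from $P(t):=\exp\bigl(\int_{0}^{t}H\bigr)$ and shows $P'/P=\tfrac{d}{dt}\bigl(-\delta\int_{0}^{S(t)}x^{*}\bigr)$, whereas you start from $R(t):=\exp\bigl(-\delta\int_{0}^{S(t)}x^{*}\bigr)$ and show $R'/R=H$; these are the same identity read in opposite directions.
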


\begin{proof}
From \eqref{trans} we have 
\begin{equation} 
dx^*=\frac{1}{\Psi}dy-(\Psi_{x^*}/\Psi^3+ 2\delta x^*/\Psi)dt.\  \end{equation}
Taking $\rho(y,t)=\frac{1}{\Psi(x^*,t)}$ we have
 
\begin{equation} 
dx^*=\rho dy+(\rho_y-2\delta x^*\rho)dt  \end{equation}
which implies 
\begin{equation} 
x^*_y=\rho \ , \quad x^*_{t}=\rho_y-2\delta\rho x^* \end{equation}
whence
\begin{equation*}
x^*_{t}=x^*_{yy}-2\delta x^*x^*_y \ . \end{equation*}
Moreover, from \eqref{trans} we have
\begin{equation}
y=\int_{X^*_0(t)}^{x^*}\Psi(\sigma,t)d\sigma+N(t)
\end{equation}then 
\begin{equation}
\frac{\partial y}{\partial t}=-\Psi(X^*_0(t),t)\dot{X}^*_0(t)+\int_{X^*_0(t)}^{x^*}\Psi_t(\sigma,t)d\sigma+N'(t)
\end{equation} and taking into account \eqref{C}, \eqref{c5} and the fact that 
\begin{equation}
\frac{\partial y}{\partial t}=\Psi_{x^*}/\Psi^2+2\delta x^* \end{equation} 
we obtain $N'(t)=0$ which implies that $N(t)$ is a constant which we take null. Therefore for $x^*=X^*_0(t)$ results $y=0$ and for $x^*=X^*_1(t)$ we obtain $y=S(t)$ defined by \eqref{ese}.

Derivating in \eqref{ese} we have
\begin{equation}\label{esepunto2}
\dot{S}(t)=\Psi(X_1^*(t),t)\dot{X}^*_1(t)-\Psi(X_0^*(t),t)\dot{X}^*_0(t) +\int_{X_0^*(t)}^{X_1^*(t)}\Psi_t(\sigma,t)d\sigma \end{equation} 
\begin{equation}
=\Psi(X_1^*(t),t)\dot{X}^*_1(t)-\Psi(X_0^*(t),t)\dot{X}^*_0(t) + \frac{\Psi_{x^*}(X_1^*(t),t)}{\Psi^2(X_1^*(t),t)}+2\delta X_1^*(t)-\frac{\Psi_{x^*}(X_0^*(t),t)}{\Psi^2(X_0^*(t),t)}-2\delta X_0^*(t) \end{equation} 
and taking into account \eqref{c4}(i) and \eqref{c5} we obtain
\begin{equation}\label{esepunto1}
\dot{S}(t)=\frac{-\delta C(t)}{L(t)}\left(\frac{1}{\Psi(X_1^*(t),t)}-\delta X_1^*(t)\right) \end{equation} which is equivalent to \eqref{b7}.
The condition \eqref{b6} yields immediately from \eqref{c4}(iii) and  \eqref{esepunto2}, and \eqref{b8} is obtained from \eqref{c5} and the fact that $x^*(S(t),t)=X_1^*(t)$.

To prove \eqref{b9} we define $P(t)=\exp\left(\int^t_{0}H(\tau)d\tau\right)$ where $H$ is defined by \eqref{ache}. We have
\begin{equation}
ln(P(t))=\int^t_{0}H(\tau)d\tau
\end{equation}
then \begin{equation}
\frac{P'(t)}{P(t)}= -\frac{T_m^3(t)}{L(t)C^{2}(t)}+\delta\frac{T_m(t)}{L(t)}x_y^{*}(S(t),t)-\delta x_y^{*}(S(t),t)+\delta^{2}{X_1^*}^{2}(t)+\delta x_y^{*}(0,t)-\delta^2 {X_0^*}^{2}(t)
\end{equation}
and taking into account \eqref{b7} and \eqref{b8} its obtain
\begin{equation}
\frac{P'(t)}{P(t)}=-\delta x^{*}(S(t),t)\dot{S}(t)-\delta x_y^{*}(S(t),t)+\delta^{2}{X_1^*}^{2}(t)+\delta x_y^{*}(0,t)-\delta^2 {X_0^*}^{2}(t)
=\frac{d}{dt}\left(-\delta\int_0^{S(t)} x^{*}(\sigma,t)d\sigma\right)
\end{equation}
thus
\[
ln(P(t))-ln(P(0))=-\delta\int_0^{S(t)} x^{*}(\sigma,t)d\sigma
\] or equivalently 
\[
P(t)=exp\left(-\delta\int_0^{S(t)} x^{*}(\sigma,t)d\sigma\right).
\]
Therefore 
\eqref{c4}(ii) is equivalent to 
\[
\frac{1}{\Psi(X_0^*(t),t)}-\delta(X_0^*(t))^2 =\frac{-q P(t)}{\delta C(t)}
\]this is 
\[
x^*_y(0,t)-\delta x^{*2}(0,t)=\frac{-q P(t)}{\delta C(t)}
\] that is to say \eqref{b9}
\end{proof} 

In conclusion we have that the solution to the problem \eqref{C}-\eqref{ache} can be obtained from the solution to the Stefan problem \eqref{b1} $T=T(y,t)$, $S=S(t)$ as established by the following theorem

\begin{theorem}\label{teoremaPsi}
The solution to the problem \eqref{C}-\eqref{ache} is obtained from the solution to the Stefan problem \eqref{b1} $T=T(y,t)$, $S=S(t)$
and its parametric expression is given by:
\begin{equation}
\Psi(x^*,t)=\frac{\delta[C(t)-\int_{S(t)}^{y}T(u,t)du]^{2}}{T_y(y,t)[C(t)-\int_{S(t)}^{y}T(u,t)du]+T^{2}(y,t)} \quad X_0^*(t)< x^* < X_1^*(t) \ , \quad t > 0
\end{equation}
\begin{equation}
x^*=\frac{T(y,t)}{\delta[C(t)-\int_{S(t)}^{y}T(u,t)du]}, \quad 0<y<S(t), \quad t>0. 
\end{equation}
where 
\begin{equation}\label{ce}
C(t)=\int_{0}^{t} [L(\tau)-T_m(\tau)]\dot{S}(\tau)d\tau, \quad t>0,
\end{equation}

\begin{equation}\label{front} X_0^*(t)=\frac{T(0,t)}{\delta[C(t)-\int_{S(t)}^{0}T(u,t)du]}  \quad X_1^*(t) =\frac{T_m(t)}{\delta C(t)}.
\end{equation}
\end{theorem}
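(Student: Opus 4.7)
The plan is to build the solution $\Psi$ by composing two maps already established in the paper: the integral representation \eqref{b2}--\eqref{b3} of Section 2, which converts the Stefan problem solution $(T,S)$ into a solution $(x^*,S)$ of the Burgers moving-boundary problem \eqref{b4},\eqref{b6}--\eqref{b9}; and the reciprocal map of Theorem 3.1, which converts that in turn into a solution of \eqref{C}--\eqref{ache} via $\Psi=1/x^*_y$. Correctness of each step is already furnished by the results preceding the statement, so the work of the proof reduces to making the composition explicit.

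First, I would define $C(t)$ by integrating \eqref{b6} with $C(0)=0$, which gives exactly \eqref{ce}. Next I would carry out the $y$-differentiation in \eqref{b2}: since $\partial_y\!\int_{S(t)}^{y}T(y',t)\,dy'=T(y,t)$, the representation \eqref{b2} collapses to
\begin{equation*}
x^*(y,t)=\frac{T(y,t)}{\delta\bigl[C(t)-\int_{S(t)}^{y}T(u,t)\,du\bigr]},
\end{equation*}
which is the second displayed formula of the theorem. A direct quotient-rule computation of $x^*_y$ then yields
\begin{equation*}
x^*_y(y,t)=\frac{T_y(y,t)\bigl[C(t)-\int_{S(t)}^{y}T(u,t)\,du\bigr]+T^{2}(y,t)}{\delta\bigl[C(t)-\int_{S(t)}^{y}T(u,t)\,du\bigr]^{2}},
\end{equation*}
so $\Psi=1/x^*_y$ produces precisely the first displayed formula of the theorem.

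It remains to identify the free boundaries. By Theorem 3.1, $\Psi$ solves \eqref{C}--\eqref{ache} on $X_0^*(t)<x^*<X_1^*(t)$, where $X_0^*(t)=x^*(0,t)$ and $X_1^*(t)=x^*(S(t),t)$; evaluating the explicit formula for $x^*$ at these two values of $y$ immediately yields the two expressions in \eqref{front}, with $X_1^*(t)=T_m(t)/\delta C(t)$ following from the phase-change condition $T(S(t),t)=T_m(t)$ (and the vanishing of the integral at $y=S(t)$).

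The one point requiring minor care is that the integral representation \eqref{b2}--\eqref{b3} and the reciprocal transformation of Theorem 3.1 were each derived under the assumption that the denominator $C(t)-\int_{S(t)}^{y}T(u,t)\,du$ and the derivative $x^*_y$ do not vanish on the relevant domain, so I would record as a hypothesis (or verify in the concrete example of Section 4) that these non-degeneracy conditions hold; given them, every remaining step is an algebraic substitution, and no fresh analytic difficulty arises. This is the only real obstacle, and it is a bookkeeping one rather than a conceptual one.
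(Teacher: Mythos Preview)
Your proposal is correct and follows exactly the route the paper intends: Theorem~\ref{teoremaPsi} is stated in the paper as a direct corollary of the composition of the integral representation \eqref{b2}--\eqref{b3} with Theorem~3.1, and the paper gives no separate proof beyond that preamble. Your explicit computation of $x^*$ from \eqref{b2}, of $\Psi=1/x^*_y$ by the quotient rule, and of the boundary values $X_0^*,X_1^*$ by evaluation at $y=0$ and $y=S(t)$ is precisely the substitution the paper leaves to the reader.
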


In the next section we will solve the Stefan problem \eqref{b1}.

\section{Explicit solution for a canonical one-phase Stefan Problem with latent heat and phase-change temperature variable in time}

In \cite{nsdt11}, Salva and Tarzia in the context of Stefan problems with variable latent heat introduced a novel similarity solutions of the classical heat equation \eqref{b1}$_1$, namely
\begin{equation} \label{d1}
T=2\sqrt{t}\ \eta(\xi) \end{equation}
with $\xi=y/2\sqrt{t}$ and
\begin{equation} \label{d2}
\frac{1}{2}\eta''(\xi)+\xi\eta'(\xi)-\eta(\xi)=0 \end{equation}
with general solution
\begin{equation} \label{d3}
\eta(\xi)=A[\ e^{-\xi^2}+\sqrt{\pi}\xi\ \mathrm{erf}\ \xi\ ]+B\xi \end{equation}
where $A,B$ are arbitrary constants. Thus,
\begin{equation} \label{d4}
T=2\sqrt{t}\ [\ A(e^{-\xi^2}+\sqrt{\pi}\ \xi\ \mathrm{erf}\ \xi)+B\xi\ ] \end{equation}
where the boundary condition \eqref{b1}$_4$, on $y=0$ in the Stefan problem \eqref{b1} requires that
\begin{equation} \label{d5}
[\ A(-2\xi\ e^{-\xi^2}+\sqrt{\pi}\ (\mathrm{erf}\ \xi+\xi\ \frac{2}{\sqrt{\pi}}\ e^{-\xi^2})+B\ ]|_{\xi=0}=-q \end{equation}
so that $B=-q$. Here, the moving boundary in \eqref{b1} is taken as $S(t)=2\gamma\sqrt{t}$ whence the condition \eqref{b1}$_2$, on reduction, yields
\begin{equation} \label{d6}
A=-\frac{\gamma}{\sqrt{\pi}\ \mathrm{erf}\gamma}\left(\frac{1}{\sqrt{t}}\right)L(t)+\frac{q}{\sqrt{\pi}\ \mathrm{erf\gamma}} \end{equation}
so that $L(t)\sim\sqrt{t}$. The condition \eqref{b1}$_3$ on the moving boundary $y=S(t)$ shows that
\begin{equation} \label{d7}
2\sqrt{t}\left[\ A(e^{-\gamma^2}+\sqrt{\pi}\ \gamma\ \mathrm{erf}\ \gamma)-q\gamma\right]=T_m(t) 
\end{equation}
so that $T_m(t)\sim\sqrt{t}$, with the result that $L(t)$ and $T_m(t)$ are related according to
\begin{equation} \label{d8c}
\frac{T_m(t)/2\sqrt{t}+q\gamma}{e^{-\gamma^2}+\sqrt{\pi}\ \gamma\ \mathrm{erf}\ \gamma}=\frac{-\gamma L(t)/\sqrt{t}+q}{\sqrt{\pi}\ \mathrm{erf}\ \gamma} \ . \end{equation}
If we take \begin{equation}\label{elete}
L(t)=L_0\sqrt{t} , \qquad T_m(t)=T_{m_0}\sqrt{t},\qquad L_0>T_{m_0}
\end{equation}
then
\begin{equation} \label{d6e}
A=\frac{q-L_0\gamma}{\sqrt{\pi}\mathrm{erf}\gamma} \end{equation}and the solution to \eqref{b1} is given by

\begin{equation} \label{d4}
T(y,t)=\frac{q-L_0\gamma}{\sqrt{\pi}\mathrm{erf}\gamma}\left(2\sqrt{t}e^{-\frac{y^{2}}{4t}}+\sqrt{\pi}\ y\ \mathrm{erf}\ \frac{y}{2\sqrt{t}}\right)-qy \end{equation}
where $\gamma$ must be solution of 
\begin{equation} \label{d8}
\frac{T_{m_0}/2+q/\gamma}{e^{-\gamma^2}+\sqrt{\pi}\ \gamma\ \mathrm{erf}\ \gamma}=\frac{-\gamma L_0+q}{\sqrt{\pi}\ \mathrm{erf}\ \gamma} \ . \end{equation}which is equivalent to
\begin{equation} \label{d81}
G(\gamma)=F(\gamma)
\end{equation}
where 
\begin{equation} 
G(x)=q-L_{0}x,\quad F(x)=\left(\frac{T_{m_0}}{2}+L_0x^{2}\right) e^{x^{2}}\sqrt{\pi}\ \mathrm{erf}\x,\quad x>0
\end{equation} satisfy
\[
G(0)=q>0,\quad G(+\infty)=-\infty,\quad G'(x)<0 ,\quad x>0\]
\[ F(0)=0>0,\quad G(+\infty)=+\infty,\quad F'(x)>0 , \quad x>0.\]

 From properties of functions $F$ and $G$ we obtain that there exists a unique $\gamma$, $0<\gamma<\frac{q}{L_0}$ that \eqref{d81} holds.

We are in a position to establish the following result
\begin{theorem}\label{Stefan particular}
There exists a unique solution to the Stefan problem
\begin{equation} \label{particular}
\begin{array}{l} T_t = T_{yy} \ , \quad 0 < y < S(t) \ , \quad t > 0
\\[2mm]-T_y(S(t),t) = L_0\sqrt{t}\,\ \dot{S}(t) \ , \quad t > 0 \\[2mm]
T(S(t),t)= T_{m_0}\sqrt{t} \ , \quad t > 0 \\[2mm]T_y(0,t)=-q \ , \quad q>0 \ , \quad t > 0 \\[2mm]
S(0) = 0 \ . \end{array} \end{equation} which is given by 
\begin{equation}\label{TT}
T(y,t)=\frac{q-L_0\gamma}{\sqrt{\pi}\mathrm{erf}\gamma}\left(2\sqrt{t}e^{-\frac{y^{2}}{4t}}+\sqrt{\pi}\ y\ \mathrm{erf}\ \frac{y}{2\sqrt{t}}\right)-qy, \quad 0<y<s(t), \quad t>0
\end{equation}  and 
\begin{equation}\label{fronts}
S(t)=2\gamma\sqrt{t}
\end{equation}
where $\gamma$ is the unique solution to \eqref{d8}.
\end{theorem}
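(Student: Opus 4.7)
The plan is to split the argument into two stages: first, verify that the explicit pair $(T,S)$ given by \eqref{TT} and \eqref{fronts} solves \eqref{particular} whenever $\gamma$ satisfies \eqref{d8}; second, establish that \eqref{d8} admits a unique positive root $\gamma$. The first stage is essentially bookkeeping on the similarity calculation already laid out in \eqref{d1}--\eqref{d8c}; the content of the theorem is really the second stage.

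For the verification stage, I would substitute $T=2\sqrt{t}\,\eta(y/2\sqrt{t})$ into $T_t=T_{yy}$ to reduce the PDE to the linear ODE \eqref{d2}, whose general solution \eqref{d3} has two free constants $A,B$. The fixed-flux condition $T_y(0,t)=-q$ forces $B=-q$, and with $S(t)=2\gamma\sqrt{t}$ and $L(t)=L_0\sqrt{t}$ the Stefan condition $-T_y(S(t),t)=L_0\sqrt{t}\,\dot S(t)$ forces $A$ to take the value \eqref{d6e}. The remaining phase-change condition $T(S(t),t)=T_{m_0}\sqrt{t}$ then, after factoring out $\sqrt{t}$, collapses precisely to the scalar transcendental equation \eqref{d8}. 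The initial condition $S(0)=0$ holds trivially since $S(t)=2\gamma\sqrt{t}$. Plugging the chosen $A$ into \eqref{d4} yields \eqref{TT}, so the verification reduces to algebra.

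For the second stage, I would record \eqref{d8} in the equivalent form $G(\gamma)=F(\gamma)$ already introduced in the excerpt, with
\begin{equation*}
G(x)=q-L_0 x,\qquad F(x)=\Bigl(\tfrac{T_{m_0}}{2}+L_0 x^2\Bigr)\sqrt{\pi}\,e^{x^{2}}\,\mathrm{erf}\,x.
\end{equation*}
Then I would check: $G$ is affine and strictly decreasing with $G(0)=q>0$ and $G(q/L_0)=0$; meanwhile $F$ is continuous with $F(0)=0$ and $F(x)\to+\infty$ as $x\to+\infty$. Strict monotonicity of $F$ on $(0,\infty)$, together with the intermediate value theorem applied to $F-G$ on $[0,q/L_0]$, then forces a unique crossing $\gamma\in(0,q/L_0)$. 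Combined with the verification stage, this produces the asserted solution and rules out a second similarity solution of the same family.

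The main obstacle I anticipate is establishing strict monotonicity of $F$ on $(0,\infty)$. The factor $\sqrt{\pi}\,e^{x^{2}}\mathrm{erf}\,x$ is clearly positive and strictly increasing, but the factor $\tfrac{T_{m_0}}{2}+L_0 x^{2}$ may be negative near $0$ if $T_{m_0}<0$; one differentiates to obtain
\begin{equation*}
F'(x)=\sqrt{\pi}\Bigl[2L_0 x\,e^{x^{2}}\mathrm{erf}\,x+\Bigl(\tfrac{T_{m_0}}{2}+L_0 x^{2}\Bigr)\Bigl(2x\,e^{x^{2}}\mathrm{erf}\,x+\tfrac{2}{\sqrt{\pi}}\Bigr)\Bigr],
\end{equation*}
and under the physically natural regime $L_0>T_{m_0}>0$ positivity is immediate. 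Uniqueness within the similarity class is automatic from uniqueness of $\gamma$; uniqueness amongst all classical solutions of \eqref{particular} follows from a standard maximum-principle argument for one-phase Stefan problems with Neumann data, which I would invoke rather than reproduce.
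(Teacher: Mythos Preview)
Your proposal is correct and follows essentially the same route as the paper: the similarity reduction \eqref{d1}--\eqref{d3}, determination of $B$ and $A$ from the flux and Stefan conditions, reduction of the remaining phase-change condition to the transcendental equation \eqref{d8}, and the $G(\gamma)=F(\gamma)$ monotonicity argument for a unique root in $(0,q/L_0)$ are exactly what the paper does in the passage leading up to the theorem. Your added remarks on the sign condition needed for $F'>0$ and on uniqueness beyond the similarity class are refinements the paper does not spell out, but they do not change the approach.
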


\begin{corollary}
 The coefficient $\gamma$ which characterizes the free boundary $S(t)=2\gamma\sqrt{t}$ satisfies the physical condition
\begin{equation}
q>L_0\gamma+\sqrt{\pi}\frac{T_{m_0}}{2}\mathrm{erf}\ \gamma. 
\end{equation}

\end{corollary}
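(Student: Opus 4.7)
The plan is to deduce the desired inequality directly from the transcendental equation characterising $\gamma$. After clearing denominators and multiplying by $e^{\gamma^{2}}$, the condition (\ref{d8}) is equivalent to (\ref{d81}), i.e.\ $F(\gamma)=G(\gamma)$, which reads explicitly
\begin{equation*}
\left(\tfrac{T_{m_0}}{2}+L_0\gamma^{2}\right)e^{\gamma^{2}}\sqrt{\pi}\,\mathrm{erf}\,\gamma \;=\; q-L_0\gamma.
\end{equation*}
This rewriting is the key input: it expresses $q-L_0\gamma$, which is already positive by Theorem \ref{Stefan particular} (since $\gamma\in(0,q/L_0)$), as an explicit positive multiple of $\sqrt{\pi}\,\mathrm{erf}\,\gamma$.

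The claimed inequality $q>L_0\gamma+\sqrt{\pi}\tfrac{T_{m_0}}{2}\mathrm{erf}\,\gamma$ is equivalent to $q-L_0\gamma>\sqrt{\pi}\tfrac{T_{m_0}}{2}\mathrm{erf}\,\gamma$. Substituting the identity above for the left-hand side and dividing through by $\sqrt{\pi}\,\mathrm{erf}\,\gamma>0$ (legitimate because $\gamma>0$), the statement reduces to the elementary estimate
\begin{equation*}
\tfrac{T_{m_0}}{2}\bigl(e^{\gamma^{2}}-1\bigr)+L_0\gamma^{2}e^{\gamma^{2}} \;>\; 0.
\end{equation*}

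Finally, $\gamma>0$ gives $e^{\gamma^{2}}>1$, so under the standing positivity assumption $T_{m_0}\geq 0$ (implicit in the physical interpretation of the phase-change temperature, together with $L_0>T_{m_0}$) the first summand is non-negative and the second is strictly positive since $L_0>0$; this produces the strict inequality and closes the argument. I do not anticipate any real obstacle: the proof is essentially a one-line substitution into the characteristic equation for $\gamma$, and the only point worth flagging is the sign convention on $T_{m_0}$, which is automatic in the present setting.
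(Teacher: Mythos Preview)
Your argument is correct and is exactly the route the paper implicitly has in mind: the corollary is stated without proof immediately after the derivation of \eqref{d81}, and your substitution $q-L_0\gamma=\bigl(\tfrac{T_{m_0}}{2}+L_0\gamma^{2}\bigr)e^{\gamma^{2}}\sqrt{\pi}\,\mathrm{erf}\,\gamma$ followed by the obvious estimate $e^{\gamma^{2}}>1$ is the natural one-line deduction from that characteristic equation. Your flag on the sign of $T_{m_0}$ is appropriate; the paper does not state $T_{m_0}\ge 0$ explicitly (only $L_0>T_{m_0}$), but the physical interpretation and the positivity of $T$ in \eqref{TT} make this the intended regime.
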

\begin{remark}It is recalled that in standard Stefan problems wherein $L$ and $T_m$ are constants, $\gamma$ is determined through the boundary conditions by a transcendental equation.

\medskip
\end{remark}
\begin{remark}At fixed face $y=0$ the temperature is time dependent and is given by
\begin{equation}\label{t0}
T(0,t)=\frac{2(q-L_0\gamma)}{\sqrt{\pi}\mathrm{erf}\gamma}\sqrt{t}
\end{equation}
\end{remark}
From theorems \eqref{teoremaPsi} and \eqref{Stefan particular} we can establish the following existence and uniqueness result:

\begin{theorem}
There exists a unique solution to the problem 
\begin{equation}\label{C1}\frac{\partial\Psi}{\partial t}=\frac{\partial}{\partial x^*}\left(\frac{\Psi_{x^{*}}}{\Psi^2}\right)+2\delta  \ , \quad X_0^*(t)< x^* < X_1^*(t) \ , \quad t > 0
\end{equation} with the conditions
\begin{equation} \label{c41}
\begin{array}{l}  (i)\quad   \frac{1}{\Psi(X_1^*(t),t)}-\delta{X_1^{*}}^{2}(t) = -\frac{L_0}{\delta \gamma(L_0-T_{m_0})\sqrt{t}}\left(\Psi(X_1^*(t),t) \dot{X_1^*}(t)+\frac{\Psi_{x^*}(X_1^*(t),t)}{\Psi^2(X_1^*(t),t)}+2\delta X_1^*(t)\right)\ , \quad t > 0 \\[3mm]
(ii)  \quad\frac{1}{\Psi(X_0^*(t),t)}-\delta{X_0^*}^{2}(t) =\frac{-q \exp\left(\int^{t}_{0}H(\tau)d\tau\right)}{\delta \gamma(L_0-T_{m_0})t} \ , \quad q>0 \ , \quad t > 0 \\[3mm]
(iii) \quad \frac{\gamma}{\sqrt{t}}=\Psi(X_1^*(t),t) \dot{X_1^*}(t)+\frac{\Psi_{x^*}(X_1^*(t),t)}{\Psi^2(X_1^*(t),t)}+2\delta X_1^*(t) \ , \ , \quad t > 0  \end{array} \end{equation}
where the moving boundaries $X_0^*$ and $X_1^*$ must satisfy
\begin{equation} \label{c51}
X_0^*(t)=-\int^t_{0} \left(\frac{\Psi_{x^*}(X_0^*(\tau),\tau)}{\Psi^3(X_0^*(\tau),\tau)}+2\delta \frac{X_0^*(\tau)}{\Psi(X_0^*(\tau),\tau)}\right)d\tau \quad,\quad X_1^*(t)= \frac{T_{m_0}}{\delta \gamma(L_0-T_{m_0})\sqrt{t} }\ , \quad t > 0\end{equation}
and
\begin{equation}\label{ache1}
H(t)=-\frac{T_{m_0}^3}{L_0\gamma^{2}(L_0-T_{m_0})^{2}}+\delta\frac{T_{m_0}}{L_0}\frac{1}{\Psi(X_1^*(t),t)}-\frac{\delta}{\Psi(X_1^*(t),t)}+\frac{T_{m_0}^{2}}{\gamma^{2}(L_0-T_{m_0})^{2}t}+\frac{\delta}{\Psi(X_0^*(t),t)}-\delta^2 {X_0^*}^{2}(t)
\end{equation}
which is given by

\begin{equation}\label{sol}
\Psi(x^*,t)=\frac{\delta\Theta^{2}(y,t)}{T_y(y,t)\Theta(y,t)+T^{2}(y,t)}, \quad X_0^*(t)< x^* < X_1^*(t) \ , \quad t > 0
\end{equation}
\begin{equation}\label{sol1}
x^*=\frac{\frac{q-L_0\gamma}{\sqrt{\pi}\mathrm{erf}\gamma}\left(2\sqrt{t}e^{-\frac{y^{2}}{4t}}+\sqrt{\pi}\ y\ \mathrm{erf}\ \frac{y}{2\sqrt{t}}\right)-qy}{\delta\Theta(y,t)}, \quad 0<y<S(t), \quad t>0. 
\end{equation}
where $T$ is given by \eqref{TT}

\begin{equation}\label{ce}
\Theta(y,t)=\gamma(L_0-T_{m_0})t+2q\left(\frac{y^{2}}{4t}-\gamma^{2}\right)t
\end{equation}
\[
-2\frac{q-L_0\gamma}{\sqrt{\pi}\mathrm{erf}\gamma}\left[\frac{\sqrt{\pi}}{2}\left(\mathrm{erf}\frac{y}{2\sqrt{t}}-\mathrm{erf}\gamma\right)+\sqrt{\pi}\left(\frac{y^{2}}{4t}\mathrm{erf}\frac{y}{2\sqrt{t}}-\gamma^{2}\mathrm{erf}\gamma\right)+\frac{y}{2\sqrt{t}}e^{-\frac{y^{2}}{4t}}-\gamma e^{-\gamma^{2}}\right]t\]
and the free boundaries are
\begin{equation}\label{front0} 
X_0^*(t)=\frac{C_0}{\delta\sqrt{t}},
  \qquad X_1^*(t) =\frac{C_1}{\delta\sqrt{t}}
\end{equation} with
\begin{equation}
C_0=\frac{2\frac{q-L_0\gamma}{\sqrt{\pi}\mathrm{erf}\gamma}}{\gamma(L_0-T_{m_0})-2q\gamma^{2}+2\frac{q-L_0\gamma}{\sqrt{\pi}\mathrm{erf}\gamma}\left(\frac{\sqrt{\pi}}{2}\ \mathrm{erf}\ \gamma+\sqrt{\pi}\gamma^{2}\ \mathrm{erf}\ \gamma+\gamma e^{-\gamma^{2}}\right) },\quad C_1=\frac{T_{m_0}}{\gamma(L_0-T_{m_0})}.
\end{equation} and

\end{theorem}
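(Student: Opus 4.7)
The statement is a direct specialization of Theorem \ref{teoremaPsi} to the data $L(t)=L_0\sqrt{t}$ and $T_m(t)=T_{m_0}\sqrt{t}$, for which Theorem \ref{Stefan particular} provides the unique explicit Stefan solution $T$ in (\ref{TT}) together with $S(t)=2\gamma\sqrt{t}$. Accordingly, the plan is to insert this explicit $T$ and $S$ into the parametric formulas of Theorem \ref{teoremaPsi} and to evaluate the resulting right-hand sides.

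First I would compute $C(t)$: since $L(\tau)-T_m(\tau)=(L_0-T_{m_0})\sqrt{\tau}$ and $\dot{S}(\tau)=\gamma/\sqrt{\tau}$, the integral in the definition of $C$ collapses to $C(t)=\gamma(L_0-T_{m_0})\,t$. Substituting into the expression for $X_1^*$ in Theorem \ref{teoremaPsi} gives $X_1^*(t)=T_{m_0}/(\delta\gamma(L_0-T_{m_0})\sqrt{t})$, which is the stated $C_1/(\delta\sqrt{t})$. Condition (iii) of (\ref{c41}) is recognised as the identity $\dot{S}(t)=\gamma/\sqrt{t}$, since by (\ref{esepunto}) the bracket on the right is precisely $\dot{S}(t)$; and conditions (i) and (ii) of (\ref{c41}) are simply (i) and (ii) of (\ref{c4}) after substituting the explicit $L$, $T_m$ and $C$.

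Next I would evaluate $\Theta(y,t):=C(t)-\int_{S(t)}^{y}T(u,t)\,du$ by direct integration of (\ref{TT}). An integration by parts on the term $u\,\mathrm{erf}(u/(2\sqrt{t}))$, together with the standard primitive of $u^2 e^{-u^2/(4t)}$, reduces $\int T\,du$ to the four elementary blocks $u^2$, $u\,e^{-u^2/(4t)}$, $u^2\,\mathrm{erf}(u/(2\sqrt{t}))$ and $\mathrm{erf}(u/(2\sqrt{t}))$. Evaluating between $u=2\gamma\sqrt{t}$ and $u=y$, adding $C(t)$ and collecting terms yield the announced expression for $\Theta$. Inserting $T$ and $\Theta$ into the parametric representation of Theorem \ref{teoremaPsi} then produces (\ref{sol}) and (\ref{sol1}), and evaluating $x^*$ at $y=0$ (using $\mathrm{erf}\,0=0$ and $e^{0}=1$) delivers $X_0^*(t)=C_0/(\delta\sqrt{t})$ with the stated $C_0$.

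Existence is thereby established by construction. Uniqueness follows from the bijective equivalence recorded in the two theorems of Section 3 (the forward and reciprocal transformations) combined with the uniqueness part of Theorem \ref{Stefan particular}: any solution of (\ref{C1})--(\ref{ache1}) would, via the reciprocal transformation $(\ref{trans})$--(\ref{ese}), correspond to a solution of (\ref{particular}), which is unique. The only step requiring care is the bookkeeping in evaluating $\Theta$; everything else is direct substitution.
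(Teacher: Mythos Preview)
Your proposal is correct and follows essentially the same route as the paper: specialize Theorem~\ref{teoremaPsi} using the explicit Stefan solution of Theorem~\ref{Stefan particular}, compute $C(t)=\gamma(L_0-T_{m_0})t$, evaluate $\Theta(y,t)=C(t)-\int_{S(t)}^{y}T\,du$ by elementary antiderivatives, and read off the parametric formulas and the boundaries at $y=0$ and $y=S(t)$. The only addition beyond the paper's own proof is your explicit justification of uniqueness via the reciprocal bijection of Section~3 together with the uniqueness in Theorem~\ref{Stefan particular}, which the paper leaves implicit.
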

\begin{proof}
Here, with $L(t)$ and $T_{m}(t)$ given by \eqref{elete}, on insertion in \eqref{b6}, integration shows that $C(t)$ is linear in $t$ is this
\begin{equation}\label{ce1}
C(t)=\gamma(L_0-T_{m_0})t.
\end{equation}
Moreover, by considering \eqref{TT}, \eqref{fronts} and 
$$ \int x\mathrm{erf}(x) dx=\frac{x^{2}}{2}\mathrm{erf}(x)+\frac{1}{2\sqrt{t}}\mathrm{exp}(-x^{2})x-\frac{\mathrm{erf}(x)}{4}$$  we obtain
\begin{equation}\label{t1}
\int_{S(t)}^{y}T(\sigma,t)d\sigma=-2q\left(\frac{y^{2}}{4t}-\gamma^{2}\right)t+
\end{equation}
\[
+2A\left[\frac{\sqrt{\pi}}{2}\left(\mathrm{erf}\frac{y}{2\sqrt{t}}-\mathrm{erf}\gamma\right)+\sqrt{\pi}\left(\frac{y^{2}}{4t}\mathrm{erf}\frac{y}{2\sqrt{t}}-\gamma^{2}\mathrm{erf}\gamma\right)+\frac{y}{2\sqrt{t}}e^{-\frac{y^{2}}{4t}}-\gamma e^{-\gamma^{2}}\right]t.\]

Taking into account \eqref{t0}, \eqref{ce1} and \eqref{t1} we obtain \eqref{sol} and \eqref{sol1} where function 

\begin{equation}\label{ce}
\Theta(y,t):=C(t)-\int_{S(t)}^{y}T(\sigma,t)d\sigma=\gamma(L_0-T_{m_0})t+2q\left(\frac{y^{2}}{4t}-\gamma^{2}\right)t
\end{equation}
\[
-2\frac{q-L_0\gamma}{\sqrt{\pi}\mathrm{erf}\gamma}\left[\frac{\sqrt{\pi}}{2}\left(\mathrm{erf}\frac{y}{2\sqrt{t}}-\mathrm{erf}\gamma\right)+\sqrt{\pi}\left(\frac{y^{2}}{4t}\mathrm{erf}\frac{y}{2\sqrt{t}}-\gamma^{2}\mathrm{erf}\gamma\right)+\frac{y}{2\sqrt{t}}e^{-\frac{y^{2}}{4t}}-\gamma e^{-\gamma^{2}}\right]t.\]

Rewritten \eqref{front}, we obtain \eqref{front0} 

\end{proof}
\def\bitem{\vspace{-0.2cm}\bibitem}
\def\fit#1{\textit{\frenchspacing#1}}

\section{Conclusions}
The equivalence of a new Stefan-problem with latent heat and phase-change temperature with a nonlinear evolution equation with a source term and two free boundaries is obtained. For a particular case a unique explicit solution for both free boundary problems are also obtained.

\section*{Acknowledgement}

The project has received funding from the European Union’s Horizon 2020
research and innovation programme under the Marie Skłodowska-Curie
Grant Agreement No. 823731 CONMECH and Project O06-INV00025 Universidad Austral.

\end{document}